\def\ps@pprintTitle{%
	\let\@oddhead\@empty
	\let\@evenhead\@empty
	\def\@oddfoot{\centerline{\thepage}}%
	\let\@evenfoot\@oddfoot}
\newtheorem{theorem}{Theorem}[section]
\newtheorem{lemma}[theorem]{Lemma}
\newtheorem*{remark}{Remark}
\newcommand{\lap}{\mathcal{L}}
\newcommand{\etal}{\mbox{\emph{et al.\ }}}
\newcommand{\conv}{\ast}
\newcommand{\cD}{{}^c\!D}
\newcommand{\hD}{\!D^{\alpha,\gamma}}
\newcommand{\G}{{\mathcal G}}
\begin{document}
\begin{frontmatter}
\title{Inverse source in two-parameter anomalous diffusion, \\
numerical algorithms and simulations over graded time-meshes}
	
	\author[kfupm,kmfurati]{Khaled M. Furati}
	\author[kfupm,kassem]{Kassem Mustapha}
	\author[kfupm,isarumi]{Ibrahim O. Sarumi}
	\author[calu]{ Olaniyi S. Iyiola}
	
	\address[kfupm]{King Fahd University of Petroleum \& Minerals \\ Department of Mathematics \& Statistics
		\\ Dhahran 31261, Saudi Arabia}
	
	\address[kmfurati]{kmfurati@kfupm.edu.sa}
	\address[kassem]{kassem@kfupm.edu.sa}
	\address[isarumi]{isarumi@kfupm.edu.sa}
	\address[calu]{Department of Mathematics, Computer Science and Information Systems\\ California University of Pennsylvania \\
    California, PA, USA\\ iyiola@calu.edu}

	
\begin{abstract}
We consider an inverse source two-parameter sub-diffusion model subject to a 
nonlocal initial condition. 
The problem models several physical processes, among them are the microwave heating and 
light propagation in photoelectric cells.
A bi-orthogonal pair of bases is employed to construct a series representation of the solution and
a Volterra integral equation for the source term.
We develop a numerical algorithm for approximating the unknown time-dependent source term. 
Due to the singularity of the solution near $t=0$, a graded mesh is used to improve the convergence rate. 
Numerical experiments are provided to illustrate the expected analytical order of convergence.
\end{abstract}

\begin{keyword}
Microwave heating; inverse source problem; anomalous diffusion; Volterra integral equation; collocation method; 
graded meshes.
\end{keyword}

\end{frontmatter}

\section{Introduction}
The main focus in this paper is to solve numerically the two-parameter time fractional diffusion inverse source problem: 
\begin{equation}\label{eq:problem}
\begin{aligned}
& D^{\alpha,\gamma} u(x,t)- u_{xx}(x,t) = w(t)h(x,t),
\qquad 0<x<1,\quad 0<t\leq T,\\
& I^{1-\gamma}u(x,t)|_{t=0} = g(x),\qquad \qquad 0 \leq x \leq 1,\\
& u(0,t)=u(1,t),\quad u_x(1,t)=0,\qquad 0 < t \leq T,
\end{aligned}
\end{equation}
with fractional exponents $0< \alpha \leq \gamma \leq 1$. The functions $h$ and $g$ are given,
while the solution $u$ and the inverse source term $w$ need to be determined.
Thus, and for well-posedness, we have to impose an over-determination condition, defined by
\begin{equation}\label{eq:over}
\int_0^1 u(x,t)dx = q(t), \qquad t\in[0,T].
\end{equation}
In the above model problem, the two-parameter fractional derivative operator $D^{\alpha,\gamma}$ is defined by
$$
D^{\alpha,\gamma} y(t)= D^\alpha \left(y(t) - \frac{I^{1-\gamma} y(0)}{\Gamma(\gamma)} \, t^{\gamma -1}\right),
$$
where $I^\mu$ and $D^\mu$ are the Riemann-Liouville fractional integral and derivative, respectively.
That is,  for $t>0$ and $0<\mu<1$, with $\omega_\mu(t):=t^{\mu-1}/\Gamma(\mu),$
$$
I^{\mu}  y(t)=\int_{0}^{t}\omega_\mu(t-\tau)
y(\tau) \, d\tau \quad {\rm and} \quad D^\mu y(t) =
\frac{d}{dt} I^{1-\mu} y(t).
$$

When $I^{1-\gamma} y$ is absolutely continuous, then $\hD y = I^{\gamma-\alpha} D^\gamma y = I^{\gamma-\alpha} D I^{1-\gamma} y$.
Hence, when  $\gamma = \beta(1-\alpha)+\alpha$, $0<\beta\leq 1$, $\hD$ reduces to the derivative introduced by Hilfer in \cite{hilfer:00evolution}.
Moreover, $\!D^{\alpha,\alpha} =D^\alpha $ and $\!D^{\alpha,1}=\cD^\alpha:= I^{1-\alpha} D$.  
Thus,  $\hD$ for $\alpha\le \gamma \le 1$ is considered as an interpolant between the Riemann-Liouville fractional derivative $D^\alpha$ and the Caputo fractional derivative $\cD^\alpha$.

Fractional PDEs open up new possibilities for robust mathematical modeling of physical processes 
that exhibit non-classical (non-Gaussian L\'evy and non-Markovian processes; and non-Brownian transport phenomena)
diffusion-dispersion. More precisely, fractional calculus provides a powerful tool for modeling a variety 
of nonlocal and memory-dependent  phenomena.  
Such phenomena are recognized in many areas such as nanotechnology \cite{baleanu:10b}, control theory of dynamical systems \cite{caponetto:10b,monje:10b}, viscoelasticity \cite{mainardi:10b,wang:18}, anomalous transport and diffusion \cite{klags:08b}, random walk dynamics \cite{michelitsch:17}, electrophysiology \cite{cusimano:18}, image processing \cite{kleefeld:18}, and flow in porous media \cite{amir:18}. Some other physical and engineering processes are given in \cite{ortigueira:11b,petras:11b} and more applications can be found in the surveys in \cite{hilfer:00b, kilbas:06b, podlubny:99b}.
In particular, fractional models are increasingly adopted for processes with anomalous diffusion \cite{adams:92,metzler:00,zhou:03}. The featured role of the fractional derivatives is mainly due to their non-locality nature which is an intrinsic property of many complex systems \cite{hilfer:00}.

The problem \eqref{eq:problem} models several physical processes, among them is the microwave heating. When a material is irradiated with microwaves, the absorption of electromagnetic energy within the material increases its temperature.  This effect can be modeled by replacing the electromagnetic power dissipation term in the nonlinear diffusion equation by an unknown equivalent internal heat source of the form $w(t) h(x,t)$ \cite{chen:97}.
The quantity $q(t)$ in \eqref{eq:over} represents the total absorbed energy due to the externally applied energy or it represents the total mass in the diffusion process of a chemical \cite{Cannon:1987, Gumel:1999, Hu:2007}.
Knowing the local conversion rate of microwave energy $h(x,t)$, determining the source term $w(t)$ gives an idea of how to control the external energy.

Inverse problems associated with models for anomalous diffusion processes arise in many applications.
These problems include determining the initial conditions, boundary conditions, diffusion, fluxes and potential coefficients, fractional orders, and source terms. Such problems are in general ill-posed and some additional requirement or measurement are provided to make them well-posed. More details are given in \cite{jin:15}.

When $\gamma<1$, the  non-local initial condition  $I^{1-\gamma}u(x,t)|_{t=0}=g(x)$ 
may lead to unbounded solution $u$ of the model  problem \eqref{eq:problem} near $t=0$, 
this will definitely  increase the level of complexity. 
However, when $\gamma=1$, (one parameter Caputo derivative), 
$I^{1-\gamma}u(x,t)|_{t=0}$ reduces to the standard (local) initial condition 
$u(x,t)|_{t=0}$. In this case, the inverse time-dependent source problems for fractional diffusion equation have been investigated under various initial, boundary and over determination conditions.

For unbounded spatial domains, {\"O}zkum \etal \cite{ozkum:13} used Adomian decomposition method to determine $w(t)$ assuming that $h(x,t)=1$ (spatial variable diffusivity was allowed). Later, with $h(x)$ in place of $h(x,t)$, Yang \etal \cite{yang:15} used the Fourier regularization method to obtain an a priori error estimate between the exact solution and its regularized approximation. 

For bounded domains and for $h=h(x),$ Sakamoto and Yamamoto \cite{sakamoto:11} used eigenfunction expansions to prove a stability result for the inverse source problem of determining $w(t)$, with $Lu$ in place of $-u_{xx}$ where $L$ is a linear symmetric elliptic operator. Wei and Zhang \cite{wei:13} solved numerically a Volterra integral equation (VIE) for   $w$ using  boundary element method combined with a generalized Tikhonov regularization. For a different given data, Aleroev \etal \cite{aleroev:13}, the Banach fixed-point theorem was used to prove the existence and uniqueness of $w.$
For different given data and with $h=h(x,t),$ a similar framework was considered by Ismailov and {\c{C}}i{\c{c}}ek \cite{ismailov:16}. Furthermore, Demir \etal \cite{demir:15} recovered $w$ by introducing input-output mappings and proved that their distinguishability holds under additional measurement data at a boundary point.

In all aforementioned cited works above, the problems considered involve the Caputo derivative together with the classical initial conditions.   However, in problem \eqref{eq:problem}, we consider a two parameter fractional derivative, of which, the Caputo and Riemann-Liouville derivatives are special cases, subject to nonlocal non-self adjoint boundary conditions and a nonlocal initial condition.
Unlike the space-dependent source problems considered in \cite{furati:14, furati:17}, 
this model gives rise to a VIE of the second kind for the source term $w$.
This equation cannot be solved analytically due to the presence of a complicated 
weakly singular kernel as well as a right-hand side that involves a two parameter 
fractional derivative plus an infinite series, see \eqref{eq:w-VIE}. 
So, a numerical scheme based on the discontinuous 
collocation method is developed to approximate $w$ by $\tilde w$.

Although we focused on finding a piecewise linear polynomial solution $\tilde w$, 
our approach can be extended to high-order polynomial solutions. 
For a smooth solution $w$, (that is, for smooth kernel $E$, smooth source $G$, 
and smooth coefficient $H$ in \eqref{eq:w-VIE}), the proposed scheme is second-order accurate globally. 
However, the solution $w$ has singularity near $t=0$ due to the weak singularity in $E$ and because the source term $G$ is generally not bounded near $t = 0$. 
Thus, to achieve an optimal global $O((\Delta t)^2)$ error 
($\Delta t$ is the maximum time step-size mesh element), we employ a non-uniform graded time meshes 
that based on concentrating the mesh elements near $t=0$ to compensate for the singular behavior of $w$, 
\cite{brunner:99, cao:03, mustapha:13, mustapharyan:13}. 
The existence and uniqueness of the collocation solution $\tilde w$ and the error bounds over graded time meshes are discussed. 
The numerical source $\tilde w$ is used to approximate the solution $u$ of \eqref{eq:problem}.

The rest of the paper is organized as follows.
In section \ref{sec:fourier}, we introduce the bi-orthogonal basis and state some preliminary results. 
Section \ref{sec:w} focuses on deriving the integral equation for the source term $w$ and on proposing a numerical algorithm for the numerical approximation of $\tilde w$. 
Existence, uniqueness and error analysis are investigated. 
Section \ref{sec: approximation of u} is devoted to seek an approximate solution of $u$ using the approximate source term $\tilde w$. 
We ended the paper with some simulations in section \ref{sec: simulation}.

\section{Series Representations}
\label{sec:fourier}
As in \cite{aleroev:13}, the non-self adjoint boundary conditions lead to the bi-orthogonal pair of bases $\{\phi_{1,0}, \phi_{1,i}, \phi_{2,i} \}_{i=1}^\infty$ and $\{\psi_{1,0}, \psi_{1,i}, \psi_{2,i} \}_{i=1}^\infty$
for the space $L^2(0,1)$: with $\lambda_i = 2\pi  i$, 

\begin{equation}
\label{eq:phi-bases}
\phi_{10}(x) = 2, \qquad \phi_{1i}(x) = 4(1-x)\sin \lambda_i x,
\qquad
\phi_{2i}(x) = 4\cos \lambda_i x,
\end{equation}

\begin{equation}
\label{eq:psi-bases}
\psi_{10}(x) = x, \qquad \psi_{1i}(x) = \sin \lambda_i x,
\qquad \psi_{2i}(x) = x\cos \lambda_i x.
\end{equation}
We consider a series solution of the form  
\begin{equation}
\label{eq:u-series}
u(x,t) =
u_{10}(t) \, \phi_{10}(x) + \sum_{\substack{i=1\\k=1,2}}^{\infty} u_{ki}(t) \, \phi_{ki}(x),
\end{equation}
where
\begin{equation}
\label{eq:hn}
u_{10}(t) = \langle u, \psi_{10} \rangle,\qquad u_{ki}(t) =
\langle u, \psi_{ki} \rangle, \quad k=1,2, \quad i=1,2,\dots.
\end{equation}
Here, $\langle \cdot, \cdot \rangle$ denotes the $L^2(0,1)$ inner product.
Similarly, we let $g_{ki}$ and $h_{ki}(t)$ denote the series coefficients of $g(x)$ and $h(x,t)$ with respect to the basis functions in \eqref{eq:phi-bases}. Following \cite{furati:17}, we assume that the functions $g$ and $h$ are sufficiently regular to guarantee the convergence of the series solution. 

Substituting the series expansions of $u$ and $h$ in \eqref{eq:problem}, we have
\begin{eqnarray}
\label{eq:u1}
D^{\alpha,\gamma} u_{1i}(t) + \lambda_i^2 u_{1i}(t) &=& w(t)h_{1i}(t),\qquad \qquad i \geq 0,
\\
\label{eq:u2}
D^{\alpha,\gamma} u_{2i}(t) + \lambda_i^2 u_{2i}(t) +
2\lambda_i u_{1i}(t)&=& w(t)h_{2i}(t), \qquad \qquad i \geq 1,
\end{eqnarray}
with $\lambda_0 :=0$. 
Moreover, from the initial condition in \eqref{eq:problem}, we obtain the initial conditions 
\begin{equation}
\label{eq:ic}
I^{1-\gamma} u_{10} (0) = g_{10}, \qquad
I^{1-\gamma} u_{ki} (0) = g_{ki},\qquad k=1,2, \quad i\ge 1.
\end{equation}
Using Laplace transform and its inverse, the solution of these initial value problems are
\begin{equation}
\label{eq:u1n}
u_{1i}(t) = \left(w\,h_{1i}\right)(t) \conv  \Theta^\alpha_i(t) +
g_{1i} \Theta^\gamma_i(t), \qquad i \geq 0,
\end{equation}
and
\begin{equation}
	\label{eq:u2n}
	\begin{aligned}
	& u_{2i}(t)  =
	\left(w\,h_{2i}\right)(t) \conv  \Theta^\alpha_i(t)
	+ g_{2i} \Theta^\gamma_i(t)
	\\ &\quad -
	2 \lambda_i \Big[
	\left(w\,h_{1i}\right)(t)\conv (t^{2\alpha - 1}
	E_{\alpha,2\alpha}^2(-\lambda_i^2 t^\alpha)) +
	g_{1i}t^{\sigma} 
	E_{\alpha,\alpha+\gamma}^2(-\lambda_i^2 t^\alpha) 
	\Big], \quad i \geq 1,
	\end{aligned}
\end{equation} 
where the convolution 
$$
f(t)\conv g(t) := \int_0^t f(\tau) \, g(t-\tau)\, d\tau~~{\rm and}~~
\Theta^\gamma_i(t) := t^{\gamma-1} E_{\alpha,\gamma}(-\lambda_i^2 t^\alpha).$$
Here, the generalized Mittag-Leffler function \cite{prabhakar:71} is defined by 
\begin{equation}
\label{eq:GMLF1}
E_{\alpha,\beta}^\rho (t) = \sum_{k=0}^\infty
\frac{\Gamma(\rho+k)}{\Gamma(\rho) \, \Gamma(\alpha k + \beta)} \, \frac{t^k}{k!}.
\end{equation}

Therefore, once the source term $w(t)$ is determined, the series coefficients of the solution $u(x,t)$ can be computed.
In the next section,  we discuss the determination and approximation of $w(t)$.
\section{Determination and approximation of $w$.}
\label{sec:w}

This section is devoted to discuss the process of approximating the source term $w(t)$ in \eqref{eq:problem}. 
It turns out that $w$ satisfies a VIE of the second kind with variable coefficients. 
Due to the complexity in solving this integral equation analytically, the numerical solution of $w$ via a discontinuous collocation method 
is investigated.

\subsection{Integral equation}
Integrating the first equation  in \eqref{eq:problem} over the spatial domain $[0,1]$  
and using the given boundary conditions, we obtain for $0<t<T$ that
\begin{equation}
\label{eq:HW}
 H(t) w(t) =  \int_0^1 D^{\alpha,\gamma} u(x,t)dx + u_x(0,t), \quad {\rm where}~~H(t) = \int_0^1 h(x,t)dx.
\end{equation}
From condition \eqref{eq:over},
\begin{eqnarray}
\int_0^1 D^{\alpha,\gamma} u(x,t)dx = D^{\alpha,\gamma} \int_0^1 u(x,t)dx = 
D^{\alpha,\gamma} q(t),
\end{eqnarray}
and thus,
\begin{equation}
\label{eq:problem-integral}
 H(t) w(t) =  D^{\alpha,\gamma} q(t) + u_x(0,t), \qquad 0<t<T.
\end{equation}
For the second term on the right hand side of \eqref{eq:HW}, the series 
representation of $u(x,t)$ in \eqref{eq:u-series} and the associated  coefficients \eqref{eq:u1n} yield
\begin{align*}
u_x(0,t) &= 4 \sum_{n=1}^\infty \lambda_i u_{1i}(t)=
4  \sum_{i=1}^\infty \lambda_i \Big\{
g_{1i} \, \Theta^\gamma_i(t) + \left( w\,h_{1i}\right)(t)
\conv  \Theta^\alpha_i(t) \Big\}
\\ &=
4\sum_{i=1}^\infty \lambda_i
g_{1i} \, \Theta^\gamma_i(t) + 
\int_0^t w(\tau) \sum_{i=1}^\infty 4 \lambda_i \, h_{1i}(\tau)\,  
\Theta^\alpha_i(t-\tau) \, d \tau.
\end{align*}
Inserting this in \eqref{eq:problem-integral} amounts to the following VIE of the second kind:
\begin{equation}
\label{eq:w-VIE}
H(t) \, w(t)-\int_0^t E(t,\tau) \, w(\tau) \, d\tau= G(t),
\end{equation}
where
\begin{equation}
\label{eq:E}
E(t,\tau) = 4 \sum_{i=1}^\infty \lambda_i \, h_{1i}(\tau) \, \Theta^\alpha_i(t-\tau),
\end{equation}
and
\begin{equation}
\label{eq:G}
G(t) = D^{\alpha,\gamma} q(t) + 4 \sum_{i=1}^\infty \lambda_i \, g_{1i} \, \Theta^\gamma_i(t).
\end{equation}

Note that the kernel $E$  can be written in the form
\begin{equation}
\label{eq: bound of E}
E(t,\tau) = (t-\tau)^{\alpha-1} \, \tilde{E}(t,\tau), \qquad
\tilde{E}(t,\tau) = 4 \sum_{i=1}^\infty \lambda_i \, h_{1i}(\tau) \,
E_{\alpha,\alpha} \left(-\lambda_i^2 (t-\tau)^\alpha \right).
\end{equation}
Since $|E_{\alpha,\alpha} \left(-\lambda_i^2 s^\alpha \right)|\le \dfrac{C}{1+\lambda_i^2 s^\alpha}$, 
\begin{equation}
\label{eq: bound of E1}
|\tilde{E}(t,\tau)| \le C \sum_{i=1}^\infty \lambda_i^{-1} \, h_{1i}(\tau) \le C,
\end{equation}
where the second inequality is valid provided that $h(x,t)$ has a finite number of 
Fourier-modes or is sufficiently regular 
(for instance, $h_{1i}(\tau)\le C\lambda_i^{-2-\epsilon}$ for some positive $\epsilon$), 
see \cite{furati:17} for more details. As a consequence, $\tilde{E}(\cdot,\cdot)$ 
is continuous on $[0,T]\times [0,T]$. Also,  for sufficiently regular functions $g$ and $q$, 
one can check that the function $G$ 
is continuous on  $[\epsilon,T]$ for $0<\epsilon<T.$ This guarantees the existence
and uniqueness of the continuous solution $w_\epsilon$ of \eqref{eq:w-VIE} on the interval 
$[\epsilon,T]$, for more details, we refer the reader to \cite[Chapter 6]{brunner:04b} or
 \cite[Section 3.4]{linz:85}. In the limiting case, $\epsilon$ approaches $0$, the existence 
and uniqueness of the continuous solution $w$ of \eqref{eq:w-VIE} on $(0,T]$ follows.
 
The numerical solution of  \eqref{eq:w-VIE} via a discontinuous collocation method will be investigated in the next subsection. The upper bound in \eqref{eq: bound of E1} is used below to derive the error estimates for our discretization. 

\subsection{Approximation of $w(t)$}
\label{sec:w-approx}
We intend to approximate the solution $w$ of \eqref{eq:w-VIE} by piecewise 
linear polynomials via a robust discontinuous collocation method. 
To do so, we introduce a  time partition of the interval $[0,T]$ given by the graded nodes 
$t_n=(n/N)^\delta T$ (with $\delta \ge 1$) for $0\le n\le N$. 
Let $I_n=(t_{n-1},t_n]$ and $\Delta t_n=t_n-t_{n-1}$, $1\le n\le N$, 
with $\Delta t:=\max_{1\le n\le N}\Delta t_n$. 

Let $S$ be the finite dimensional space of linear polynomials on each time mesh element $I_n$, $1 \leq n \leq N$. 
The set of collocation (grid) points is
$$
X= \Big\{ t_{n,j}:= t_{n-1}+\xi_j \Delta t_n, \quad 1\le j\le 2,\quad n=1,\dots, N
\Big\}, \qquad 0 < \xi_1< \xi_2< 1.
$$ 
One option is to choose $\xi_1$ and $\xi_2$ to be  the Gauss quadrature points.

The discontinuous collocation solution ${\tilde w} \in S$ is now defined by
\begin{equation}
\label{eq:tilde w-equation}
H(t) \, \tilde w(t)-\int_0^t E(t,\tau) \, \tilde w (\tau) \, d\tau= G(t),
\qquad t\in X.
\end{equation}
Alternatively, this scheme can be rewritten as: for $j=1,2,$ and  $n = 1, \dots, N,$ 
\begin{equation}
\label{eq:VIE-discrete}
H(t_{n,j}) \, \tilde w(t_{n,j}) - \int_0^{t_{n,j}} E(t_{n,j},\tau) \, \tilde w(\tau) \, d\tau 
=
G(t_{n,j})\,.
\end{equation}
On each subinterval $I_n$, we will base the computational form of $\tilde w$ on the 
Lagrange basis functions with respect to the collocation parameters $\{\xi_1,\xi_2\}$. 
That is,
\begin{equation}
\label{eq: Lagrange representation}
\tilde w(t) = \tilde w(t_{n,1}) L_{n,1}(t) + \tilde w(t_{n,2}) L_{n,2}(t),
\quad t\in I_n,
\end{equation}
where for each fixed $n$, $\{L_{n,j}\}_{j=1,2}$ are the local Lagrange basis functions 
associated with the collocation points $\{t_{n,j}\}_{j=1,2}$ corresponding to the interval $I_n$ defined by
\begin{equation}
\label{eq:basis}
L_{n,1}(t)=\frac{t_{n,2}-t}{\zeta_n}\quad{\rm and}\quad L_{n,2}(t)=
\frac{t-t_{n,1}}{\zeta_n},\quad \zeta_n=t_{n,2}-t_{n,1}.
\end{equation}

To handle the integral term in \eqref{eq:VIE-discrete}, we  use the definition $\tilde w$ 
given in \eqref{eq: Lagrange representation} and obtain,
\begin{align*}
 \int_0^{t_{n,j}} E(t_{n,j},\tau) \, \tilde w(\tau) \, d\tau
 & =
\sum_{s=1}^2\sum_{m=1}^n \tilde w(t_{m,s}) 
\int_{t_{m-1}}^{\min\{t_{n,j},t_m\}} E(t_{n,j},\tau) \, L_{m,s}(\tau) \, d\tau
\\ & =
\sum_{s=1}^2  \sum_{m=1}^n  a_{m,s}(t_{n,j}) \, \tilde w(t_{m,s}),
\end{align*}
where, for $1\le m\le n,$ 
\[a_{m,s}(t_{n,j}) = \int_{t_{m-1}}^{\min\{t_{n,j},t_m\}} E(t_{n,j},\tau) \, L_{m,s}(\tau) \, d\tau.\]

Thus the numerical scheme in \eqref{eq:VIE-discrete} can be written as
\begin{equation}
\label{eq:wnj}
H(t_{n,j}) \,\tilde w(t_{n,j}) - \sum_{s=1}^2 a_{n,s}(t_{n,j}) \, \tilde{w}^{n,s} = F^{n,j},
\qquad j=1,2, \quad n = 1, \dots, N,
\end{equation}
with
$$
F^{n,j} = G(t_{n,j})+ \sum_{s=1}^2  \sum_{m=1}^{n-1}  a_{m,s}(t_{n,j}) \, \tilde w(t_{m,s}).
$$
Therefore, for each $n$, we have to solve the $2\times 2$ linear system
\begin{equation}
\label{eq: matrix form}
({\bf H}^n - {\bf B}^n) \left[ \begin{matrix} \tilde{w}(t_{n,1}) \\ \tilde{w}(t_{n,2}) 
\end{matrix} \right]= {\bf F}^n:=\left[ \begin{matrix} F^{n,1} \\ F^{n,2} \end{matrix} \right],
\end{equation}
with
$${\bf H}^n = 
\left[ \begin{matrix} H(t_{n,1}) & 0 \\  0 & H(t_{n,2}) \end{matrix} \right], \qquad
{\bf B}^n = 
\left[ \begin{matrix} a_{n,1}(t_{n,1}) & a_{n,2}(t_{n,1}) \\  a_{n,1}(t_{n,2}) & a_{n,2}(t_{n,2}) 
\end{matrix} \right].
$$
Since our discontinuous collocation scheme \eqref{eq:tilde w-equation}  amounts to a $2$-by-$2$ 
linear system on each time mesh element $I_n$, the existence of the solution $\tilde w$ follows from its uniqueness. 
To guarantee the uniqueness of $\tilde w$, it is clear from  \eqref{eq: matrix form} that 
we need the matrix ${\bf H}^n - {\bf B}^n$ to be non-singular. 
However, the kernel is weakly singular of order $\alpha-1$, then for any $0<\alpha \leq 1$, 
there exists $k_\alpha > 0$ such that the non-singularity of the matrix  ${\bf H}^n - {\bf B}^n$ 
can be assured  whenever $\Delta t<k_\alpha$, provided the matrix $H$ is non-singular.   

The collocation scheme \eqref{eq:VIE-discrete} is yet fully discrete due to the integrals 
in the entries of the matrix ${\bf B}^n$ and the vector ${\bf F}^n$. 
To compute these entries, we approximate the Fourier coefficient $h_{1i}(t)$, $i=1,2,\dots$ 
by its midpoint average value on each time subinterval $I_n$ and the remaining part of the 
integral can be computed exactly. 
This approximation preserves the second order of accuracy (over nonuniform meshes) of the 
collocation scheme in the presence of the weakly singular kernel $E$. 
Explicitly, for $1\le n,m\le N$, and for $j,s \in \{1,2\}$,
\begin{align*}
a_{m,s}(t_{n,j}) &=
4 \int_{t_{m-1}}^{\min\{t_{n,j},t_m\}} \sum_{i=1}^\infty \lambda_i \, h_{1i}(\tau) \,
\Theta^\alpha_i(t_{n,j}-\tau) \, L_{m,s}(\tau) \, d\tau
\\ &\approx
4  \sum_{i=1}^\infty \lambda_i \, h_{1i}^{m-1/2} \, \int_{t_{m-1}}^{\min\{t_{n,j},t_m\}} 
\Theta^\alpha_i(t_{n,j}-\tau) \, L_{m,s}(\tau) \, d\tau
\\ &=
4  \sum_{i=1}^\infty \lambda_i \, h_{1i}^{m-1/2} \, 
\int_{t_{m-1}}^{\min\{t_{n,j},t_m\}} 
(t_{n,j}-\tau)^{\alpha-1} \, E_{\alpha,\alpha}(-\lambda_i^2 (t_{n,j}-\tau) \, 
L_{m,s}(\tau) \, d\tau
\\  &=
4  \sum_{i=1}^\infty \frac{1}{\lambda_i} \, h_{1i}^{m-1/2} \, 
\Psi^\alpha(t_{n,j},\tau, \lambda_i^2, L_{m,s})\Big|_{t_{m-1}}^{\min\{t_{n,j},t_m\}},
\end{align*}
where the integral formula  in the lemma below  is used in the last equality. 

\begin{lemma}\label{lem: formula} 
For any linear polynomial   $L(t) = \nu t + c$, we have 
\begin{equation}\label{eq:psi} \lambda \int_a^b (t-\tau)^{\alpha-1} \, E_{\alpha,\alpha} (-\lambda(t-\tau)^\alpha ) \, L(\tau) \, d\tau  =
\Psi^\alpha(t,\tau, \lambda, L)\Big|_{\tau=a}^{\tau=b},
\end{equation}
where
\[
\Psi^\alpha(t,\tau, \lambda, L) =
E_\alpha(-\lambda(t-\tau)^\alpha) \, L(\tau)
+ \nu \, (t-\tau) \, E_{\alpha,2}(-\lambda(t-\tau)^\alpha).\]
\end{lemma}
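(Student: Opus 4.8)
The plan is to recognize \eqref{eq:psi} as the assertion that $\Psi^\alpha(t,\cdot,\lambda,L)$ is an antiderivative, in the variable $\tau$, of the integrand on the left-hand side, so that the identity is immediate from the fundamental theorem of calculus. Concretely, setting $s=t-\tau$ (so that $ds/d\tau=-1$) and differentiating $\Psi^\alpha$ with respect to $\tau$, I would establish
\begin{equation*}
\frac{\partial}{\partial\tau}\,\Psi^\alpha(t,\tau,\lambda,L)=\lambda\,(t-\tau)^{\alpha-1}\,E_{\alpha,\alpha}\bigl(-\lambda(t-\tau)^\alpha\bigr)\,L(\tau),
\end{equation*}
which is exactly the integrand; integrating from $a$ to $b$ then produces $\Psi^\alpha\big|_{\tau=a}^{\tau=b}$.

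The computation rests on two differentiation identities for the Mittag-Leffler functions, both obtained by differentiating the defining power series \eqref{eq:GMLF1} (with $\rho=1$) term by term and invoking the recurrence $\Gamma(z+1)=z\Gamma(z)$. First, I would show $\frac{d}{ds}E_\alpha(-\lambda s^\alpha)=-\lambda\,s^{\alpha-1}E_{\alpha,\alpha}(-\lambda s^\alpha)$: the $k=0$ term drops out under differentiation, and an index shift extracts the factor $-\lambda$. Second, $\frac{d}{ds}\bigl[s\,E_{\alpha,2}(-\lambda s^\alpha)\bigr]=E_\alpha(-\lambda s^\alpha)$, since each coefficient $(\alpha k+1)/\Gamma(\alpha k+2)$ collapses to $1/\Gamma(\alpha k+1)$. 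Term-by-term differentiation is justified by the local uniform convergence of these power series on compact subsets of $s>0$.

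With these in hand, I would apply the product rule to the first summand of $\Psi^\alpha$, namely $E_\alpha(-\lambda s^\alpha)\,L(\tau)$: the chain rule together with $ds/d\tau=-1$ converts the first identity into the target term $\lambda(t-\tau)^{\alpha-1}E_{\alpha,\alpha}(\cdots)\,L(\tau)$, while the derivative landing on $L(\tau)=\nu\tau+c$ contributes $\nu\,E_\alpha(-\lambda s^\alpha)$. Differentiating the second summand $\nu\,s\,E_{\alpha,2}(-\lambda s^\alpha)$ via the second identity (and the same chain rule) contributes $-\nu\,E_\alpha(-\lambda s^\alpha)$, which cancels the stray term exactly, leaving precisely the integrand. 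I expect the only genuinely delicate point to be the endpoint $\tau=t$ (that is, $s=0$), which does arise in the application whenever $b=t_{n,j}$: there the integrand carries the weak singularity $(t-\tau)^{\alpha-1}$, but since $\alpha>0$ it is integrable, and $\Psi^\alpha$ extends continuously to $\tau=t$ because $E_\alpha(0)=1$ while $s\,E_{\alpha,2}(-\lambda s^\alpha)\to0$. Hence the fundamental theorem applies in the convergent improper sense, and the stated formula holds.
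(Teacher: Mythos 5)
Your proof is correct, and it runs in the opposite direction from the paper's. The paper \emph{constructs} the antiderivative: it integrates by parts using the formula $\int (t-\tau)^{\beta-1}E^\rho_{\alpha,\beta}(-\lambda(t-\tau)^\alpha)\,d\tau=-(t-\tau)^\beta E^\rho_{\alpha,\beta+1}(-\lambda(t-\tau)^\alpha)+C$ (applied twice, for $\beta=\alpha$ and $\beta=\alpha+1$), then converts the resulting terms $-\lambda(t-\tau)^\alpha E_{\alpha,\alpha+1}$ and $-\lambda(t-\tau)^{\alpha+1}E_{\alpha,\alpha+2}$ into $E_\alpha-1$ and $(t-\tau)(E_{\alpha,2}-1)$ via the recurrence $\lambda t^\alpha E_{\alpha,\alpha+\beta}(-\lambda t^\alpha)=1/\Gamma(\beta)-E_{\alpha,\beta}(-\lambda t^\alpha)$, and finally disposes of the stray constants by noting that $L(\tau)+\nu(t-\tau)$ is independent of $\tau$, so its endpoint difference vanishes. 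You instead \emph{verify} the stated $\Psi^\alpha$ by differentiating it in $\tau$, using the series identities $\frac{d}{ds}E_\alpha(-\lambda s^\alpha)=-\lambda s^{\alpha-1}E_{\alpha,\alpha}(-\lambda s^\alpha)$ and $\frac{d}{ds}[sE_{\alpha,2}(-\lambda s^\alpha)]=E_\alpha(-\lambda s^\alpha)$, and the cancellation of the $\nu E_\alpha$ terms happens automatically; both identities check out, and the claimed derivative is exactly the integrand. The trade-off: the paper's route would let one discover $\Psi^\alpha$ without knowing it in advance and extends mechanically to higher-degree $L$ and to $E^\rho_{\alpha,\beta}$ with $\rho>1$ (which the authors reuse in Section \ref{sec: approximation of u}), whereas your route is shorter once the formula is in hand and, usefully, you make explicit the one point the paper leaves tacit — that at $\tau=t$ (which occurs in the scheme when $m=n$) the integral is an absolutely convergent improper integral and $\Psi^\alpha$ extends continuously, so the evaluation at that endpoint is legitimate. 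No gaps.
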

\begin{proof}
Recall  the following formulas:  if $\alpha, \beta, \lambda \in \mathbb{C} \text{ such that } Re(\alpha) > 0 \text{ then }$
\begin{align*}\lambda t^\alpha \; E_{\alpha,\alpha+\beta}(-\lambda t^\alpha) &=
\frac{1}{\Gamma(\beta)} - E_{\alpha,\beta}(-\lambda t^\alpha),\\
\int (t-\tau)^{\beta-1} E_{\alpha,\beta}^\rho ( - \lambda (t-\tau)^\alpha ) \, d\tau
& =
-(t-\tau)^\beta E_{\alpha,\beta+1}^\rho (-\lambda (t-\tau)^\alpha) + C.
\end{align*}
Integrating by parts, and then using the second formula followed by the first one, we notice that  
\begin{align*}
\lambda \int & (t-\tau)^{\alpha-1} \, E_{\alpha,\alpha} (-\lambda(t-\tau)^\alpha ) \, L(\tau) \, d\tau 
\\& =
- \lambda (t-\tau)^\alpha E_{\alpha,\alpha+1}(-\lambda(t-\tau)^\alpha) \, L(\tau)
+ \lambda \nu \int (t-\tau)^\alpha E_{\alpha,\alpha+1}(-\lambda(t-\tau)^\alpha) \, d\tau
\\& =
- \lambda (t-\tau)^\alpha E_{\alpha,\alpha+1}(-\lambda(t-\tau)^\alpha) \, L(\tau)
- \lambda \nu \, (t-\tau)^{\alpha+1} E_{\alpha,\alpha+2}(-\lambda(t-\tau)^\alpha)
\\& =
\left[ E_\alpha(-\lambda(t-\tau)^\alpha) -1 \right] L(\tau)
+ \nu \, (t-\tau) \left[ E_{\alpha,2}(-\lambda(t-\tau)^\alpha) - 1 \right].
\end{align*}
Since $L(\tau) + \nu (t-\tau) \Big|_{\tau=a}^{\tau=b} = 0$, the desired result follows.
\end{proof}

\subsection{Error analysis}
\label{sec: error analysis}
As mentioned earlier, for VIEs with a smooth kernel and source term, the collocation scheme is $O((\Delta t)^2)$ accurate.
However, due to the lack of regularity of the continuous solution $w$ (has singularity near $t=0$) caused by the presence of the weakly singular kernel $E$ and the nonsmooth source term $G$, such order of accuracy is not feasible over uniform meshes. For this reason and to improve the convergence rates, a graded mesh with time nodes  $t_n= (n/N)^\delta T$ ($\delta\ge 1$) is employed \cite{mustapha:13, mustapharyan:13} to compensate for the singular behavior of $w$ near $t=0$.  

Next, a concise proof of the error estimates from the discontinuous collocation discretization over the graded mesh $t_n=(n/N)^\delta T$ (with $\delta \ge 1$) is given. We impose the following typical assumption on the derivative of the continuous solution $w$; for some $0<\sigma<1$ and for $t>0,$  
\begin{equation}
\label{eq: reg}
|w(t)|\le a(1+t^\alpha)+bt^{\gamma-1} \quad {\rm and} \quad |w'(t)|+t|w''(t)| \le C\,t^{\sigma-1},
\end{equation}
for some constants $a, b$.

\begin{remark}
Under reasonable assumptions on the given data, the above regularity conditions can be verified, 
where typically $\sigma=\alpha+\gamma-1$ and so, we require $\alpha+\gamma>1$. 
Another note, in the presence of the two-parameter fractional derivative $D^{\alpha,\gamma}$ ($\gamma<1$), 
the solution $w$ has a strong singularity near $t=0$.
More precisely, $w$  is not bounded near $t=0$ and $w'$ is not integrable on $[0,T]$ (unless $\sigma>0$).  
Therefore, efficient approximations of $w$ on the first sub-interval $I_1$  require more effort. 
One way, is to inherit the order of singularity (that is due to $\gamma$) in the approximate solution, 
for instance, approximate $w$ by the function $a_1+a_2 t^{\gamma-1}$ on $I_1$.
Investigating this is beyond the scope of this paper and it will be a topic of future research. 
On the other hand, it is worth to mention that for the case of Caputo derivative, $\gamma=1$,
$w$ is uniformly bounded on $[0,T]$ and $w'$ is integrable on $[0,T]$. 
\end{remark}
  
Decomposing the error  $w -\tilde w$ as 
$$
w-\tilde w= (w-P w)+(P w-\tilde w) =: \eta_1+\eta_2,
$$
where the comparison function  $P w \in S$ interpolates $w$ at the composite Gauss-quadrature nodes, that is, 
\begin{equation}
\label{eq: interpolation}
P w(t) = w(t_{n,1}) L_{n,1}(t) + w(t_{n,2}) L_{n,2}(t),
\quad t\in I_n, \quad 1\le n\le N.
\end{equation}
It is known that 
$$
\|\eta_1\|_{L^\infty(I_n)}\le C (\Delta t_n)^{j-1}\int_{I_n}|w^{(j)}(t)|\,dt,\qquad j=1, 2.
$$
Hence, by the regularity assumption \eqref{eq: reg} on $w$ we have
$$
\|\eta_1\|_{L^\infty(I_1)}\le C \int_{I_1}t^{\sigma-1}\, dt < Ck_1^{\sigma}
\le C(\Delta t)^{\sigma\delta}.
$$
Furthermore, for $n\ge 2,$ using the mesh properties; $t_n\le C t_{n-1}$ and 
$\Delta t_n\le k\,t_n^{1-1/\delta},$ we notice that 
$$
\|\eta_1\|_{L^\infty(I_n)}\le C (\Delta t_n)^2t_n^{\alpha+\gamma-3}= 
C(\Delta t_n/t_n)^{\sigma\delta} (\Delta t_n/t_n)^{2-\sigma\delta}t_n^{\sigma} 
\le C(\Delta t)^{\sigma\delta},
$$
for $0< \sigma\delta\le 2.$ Therefore, for $1\le n\le N$, 
$$
\|\eta_1\|_{L^\infty(I_n)}\le C(\Delta t)^{\sigma\delta},~~{\rm for}~~1\le \delta\le 2/\sigma.
$$

To estimate the second term $\eta_2$, we notice from \eqref{eq:w-VIE}, \eqref{eq:tilde w-equation} 
and the definition of $P w$ that 
\begin{equation}
H(t)\eta_2(t)-\int_0^t E(t,\tau) \, \eta_2(\tau) \, d\tau= \int_0^t E(t,\tau) \, \eta_1(\tau) \, d\tau,
\qquad t\in X.
\end{equation}
Using $|E(t,\tau)|\le C(t-\tau)^{\alpha-1}$ for $t<\tau$ 
(see \eqref{eq: bound of E}--\eqref{eq: bound of E1}), after some manipulations 
and for a sufficiently small step size $k$,  the above equation amounts  to 
\[
|\eta_2(t_{n,j})|\le C\max_{1\le j\le n}\|\eta_1\|_{L^\infty(I_j)} \int_0^{t_{n,j}} E(t_{n,j},\tau)\, d\tau+ C\sum_{i=1}^{n-1}
\int_{t_{i-1}}^{t_i}(t_{n,j}-\tau)^{\alpha-1}\, d\tau\sum_{\ell=1}^2 |\eta_2(t_{i,\ell})|,
\]
for $1\le n\le N$ and for $j=1,\,2.$ Thus, an application of the weakly singular 
discrete Gronwall inequality \cite[Theorem 6.1]{dixon:86} gives 
\[
|\eta_2(t_{n,j})|\le C\max_{1\le j\le n}\|\eta_1\|_{L^\infty(I_j)} 
\int_0^{t_{n,j}} E(t_{n,j},\tau)\, d\tau\le C\max_{1\le j\le n}\|\eta_1\|_{L^\infty(I_j)}.
\]
Next, the expansion of $\tilde w$ given in \eqref{eq: Lagrange representation} and the 
achieved estimate of $\eta_1$ yield 
\[
\|\eta_2\|_{L^\infty(I_n)}\le C(\Delta t)^{\sigma\delta},~~{\rm for}~~1
\le \delta\le 2/\sigma,\quad{\rm for} ~1\le n\le N,
\]
and consequently, we attain the following error bound, for $\alpha+\gamma>1,$  
\[
\|w-\tilde w\|_{L^\infty(I_n)}\le C(\Delta t)^{\sigma\delta},~~{\rm for}~~1\le 
\delta\le 2/\sigma,\quad{\rm for} ~1\le n\le N.
\]
The above error bound shows that a global  optimal $O((\Delta t)^2)$ convergence rates can be recovered by choosing the graded mesh exponent $\delta$ to be $2/\sigma$.

\section{Approximation of $u$}
\label{sec: approximation of u}
This section is devoted to discuss the numerical approximation of the solution 
$u$ of the fractional model problem \eqref{eq:problem}, denoted by $\tilde u$. 
More precisely, we approximate the Fourier coefficients $u_{ki}$ for $k=1, 2$ 
in equations \eqref{eq:u1n} and \eqref{eq:u2n} by $\tilde u_{ki}$ 
using the (source term) collocation solution $\tilde w$. Thus,  
\begin{align*}
\tilde u_{1i}(t)
&\approx \left(\tilde w\,h_{1i}\right)(t)\conv  \Theta^\alpha_i(t)
+ g_{1i} \Theta^\gamma_i(t), \qquad i\geq 0,
\\
\tilde u_{2i}(t) &\approx 
\left(\tilde w\,h_{2i}\right)(t) \conv  \Theta^\alpha_i(t)
+ g_{2i} \Theta^\gamma_i(t)
\\ & \quad -
2 \lambda_i \left[
\left(\tilde w\,h_{1i}\right)(t)\conv (t^{2\alpha-1} E_{\alpha,2\alpha}^2(-\lambda_i^2 t^\alpha))
+ g_{1i}t^{\sigma} E_{\alpha,\alpha+\gamma}^2(-\lambda_i^2 t^\alpha) \right],
 \quad i \geq 1.
\end{align*}

Following the convention in the previous subsection,  we approximate the convolutions as follows:
\begin{align*}
\left(\tilde w\,h_{1i}\right)(t_n)\conv  \Theta^\alpha_i(t_n)
	 &\approx
	\sum_{m=1}^{n}
	h_{1i}^{m-1/2} \int_{t_{m-1}}^{t_m}
	\Theta_i^\alpha(t_n-\tau) \,
	\sum_{s=1}^2 \tilde w(t_{m,s}) \, L_{m,s}(\tau) \, d\tau
	\\ &=
	\sum_{m=1}^{n}
	h_{1i}^{m-1/2}
	\sum_{s=1}^2 \tilde w(t_{m,s}) \, \int_{t_{m-1}}^{t_m} \Theta_i^\alpha(t_n-\tau) \, L_{m,s}(\tau) \, d\tau.
\end{align*}
Hence, for $ i > 0$
\[\tilde u_{1i}(t_n) \approx \sum\limits_{m=1}^n \frac{h^{m-1/2}_{1i}}{\lambda_i^2} \sum\limits_{s = 1}^2 \tilde w(t_{m,s})\Psi^\alpha(t_n,\tau, \lambda_i^2, L_{m,s})\Big|_{t_{m-1}}^{t_m} + g_{1i} \Theta^\gamma_i(t_n).\]
In a similar fashion, the first two terms in  $\tilde u_{2i}$ can be computed (just replace $h_{1i}$ and $g_{1i}$ with $h_{2i}$ and $g_{2i}$, respectively,  in the above equation). 

It remains to compute $\tilde u_{10}$ and the second convolution term in  $\tilde u_{2i}.$   For $i = 0$,
\begin{align*}
	(\tilde wh_{10}\conv \omega_\alpha)(t_n) &\approx
	\sum_{m=1}^{n}
	h_{10}^{m-1/2} \int_{t_{m-1}}^{t_m}
	\tilde w(\tau)\,\omega_\alpha(t_n - \tau)\, d\tau
	\\ &\approx
	\sum_{m=1}^{n}
	h_{10}^{m-1/2}
	\sum_{s=1}^2 \tilde w(t_{m,s}) \, \int_{t_{m-1}}^{t_m} L_{m,s}(\tau) \omega_\alpha(t_n - \tau)\, d\tau,
\end{align*}
and thus, 
\begin{multline*}
	\tilde u_{10}(t_n) 
	=
	(wh_{10}\conv \omega_\alpha)(t_n) + g_{10}\omega_\gamma(t_n)
	\approx 	
	\sum_{m=1}^{n}
	h_{10}^{m-1/2} \sum_{s=1}^2 \tilde w(t_{m,s}) \times \\
	\Big(L_{m,s}(\tau)\, \omega_{\alpha+1}(t_n - \tau) + 
	\frac{(-1)^s}{\zeta_m}\omega_{\alpha+2}(t_n - \tau)\Big)\Big|^{t_{m-1}}_{t_m} + 
	g_{10}\omega_\gamma(t_n).
\end{multline*}
Similarly, we compute the second convolution term in  $\tilde u_{2i}$, and get 
\[
\left[(\tilde w\,h_{1i})(t_n) \conv \big(t_n^{2\alpha - 1} 
E_{\alpha,2\alpha}^2(-\lambda_i^2 t^\alpha_n)\big)\right]
\approx
\sum_{m=1}^{n}
h_{1i}^{m-1/2} \sum_{s=1}^2 \tilde w(t_{m,s}) \, \mathcal{T}_{\alpha,n}^{m}
\]
with
\begin{align*}
\mathcal{T}^{\alpha,n}_{m,s}
&:=
\int_{t_{m-1}}^{t_m} L_{m,s}(\tau) (t_n - \tau)^{2\alpha-1}
E_{\alpha,2\alpha}^2(-\lambda_i^2 (t_n - \tau)^\alpha) d\tau
\\ &=
-L_{m,s}(\tau)(t_n-\tau)^{2\alpha} E_{\alpha,2\alpha+1}^2
(-\lambda_i^2 (t_n-\tau)^\alpha)\Big|_{t_{m-1}}^{t_m}
\\ &\quad-
\frac{(-1)^s}{\zeta_m}(t_n-\tau)^{2\alpha+1} E_{\alpha,2\alpha+2}^2
(-\lambda_i^2 (t_n-\tau)^\alpha)\Big|_{t_{m-1}}^{t_m}.
\end{align*}
In the second equality, we integrated by parts and used the second formula in the proof of 
Lemma  \ref{lem: formula}.  
By Combining the above contributions, we complete the computation of $\tilde u_{2i}$.

\section{Numerical results}\label{sec: simulation}

This section is devoted to illustrate numerically the convergence of the proposed discontinuous 
collocation scheme for the VIE of the form \eqref{eq:w-VIE} and to compare between the analytical 
solution $u$ of \eqref{eq:problem} and its approximation $\tilde u$.
For this purpose and to be able to  compute $w$ and $u$ exactly, we consider problem \eqref{eq:problem} 
with $T=1$ and
\begin{align*}
g(x) &= 4\cos(4\pi x),
\\ 
q(t) &= \omega_{1+\alpha}(t), \\
h(x,t) &= \frac{\lambda_1^2-1}{\lambda_1^2} +  \frac{(1-x)\sin(2\pi x)}{ \lambda_1} + 4t\cos(2\pi x)+8\cos(4\pi x). 
\end{align*}
The only nonzero series coefficients of $h(x,t)$ and $g(x)$ are
\begin{equation}
\label{eq:example-coeff}
h_{10} = \frac{\lambda_1^2-1}{2\lambda_1^2}, \qquad	h_{11} = \frac{1}{4\lambda_1}, \qquad 
h_{21}(t) = t, \qquad h_{22}(t)=2, \qquad g_{22} = 1.
\end{equation}
Furthermore, for this choice of $h(x,t)$ and $q(t)$, we have
\begin{equation}
\label{ex:H}
H(t) = \int_0^1 h(x,t) \, dx =  2 \, \frac{\lambda_1^2-1}{2\lambda_1^2} + 
\frac{1}{\lambda_1^2} = 1 \quad {\rm and} \quad D^{\alpha,\gamma} q(t)= D^\alpha q(t) = 1.
\end{equation}
Inserting these terms  in  \eqref{eq:E} and \eqref{eq:G}, we reach 
$$
E(t,\tau)  = \Theta^\alpha_1(t-\tau) \quad {\rm and} \quad
G(t)  = D^{\alpha,\gamma} q(t) + \sum_{i=1}^\infty 4\lambda_i \, g_{1i} \, \Theta^\gamma_i(t) = 1.
$$
Therefore, the VIE \eqref{eq:w-VIE} reduces to  $w(t) - (w \conv \Theta^\alpha_1)(t) = 1$.
To find $w$, we take Laplace transform of both sides and use the formula 
\begin{equation}
\label{eq:lap-mlf-general}
\lap\{t^{\beta-1} \, E_{\alpha,\beta} \left(a t^\alpha \right)\}(s)
=\frac{s^{\alpha -\beta}}{s^\alpha - a}, \qquad \alpha, \beta>0.
\end{equation}
Let $W$ denotes Laplace transform of $w$, then
\begin{equation}
\label{eq:lap-w}
W(s) = \frac{1}{s} \frac{s^\alpha + \lambda_1^2}{s^\alpha + \lambda_1^2 - 1}
= \frac{1}{\lambda_1^2-1} \left[\frac{\lambda_1^2}{s} - 
\frac{s^{\alpha-1}}{s^\alpha+\lambda_1^2 -1} \right].
\end{equation}
By taking the inverse Laplace transform, the exact source term is given by
\begin{equation}
\label{w-example}
w(t) = \frac{1}{\lambda_1^2-1} \left[ 
\lambda_1^2 - E_\alpha ((1 - \lambda_1^2) t^\alpha) \right].
\end{equation}

Next, we find the series coefficients for the solution $u(x,t)$. By \eqref{eq:u1n}, \eqref{eq:u2n} and \eqref{eq:example-coeff},
\begin{equation}
\label{eq:u1i-example}
u_{1i}(t) = h_{1i} \, (w \conv \Theta^\alpha_i)(t)~~{\rm for}~~i = 0, 1,~~{\rm and}~~
u_{1i}=0~~{\rm for}~~ i \ge 2,\end{equation}
 and
\begin{equation}
\label{eq:u21}
u_{2i}(t)  = \begin{cases}(t w(t) ) \conv  \Theta^\alpha_1(t) -
\frac{1}{2}
w(t) \conv \big(t^{2\alpha - 1} E_{\alpha,2\alpha}^2(-\lambda_1^2 t^\alpha) 
\big),&\qquad i = 1,
\\ 
 2 (w \conv  \Theta^\alpha_2)(t)  + \Theta^\gamma_2(t),&\qquad i = 2,\\
0, &\qquad i \ge 3.\end{cases}\end{equation}
Therefore, the exact solution is given by
\begin{equation}
\label{eq: example exact solution}
u(x,t) = 2u_{10}(t) + 4(1-x) \sin (\lambda_1x)\, u_{11}(t) + 
4 \cos (\lambda_1x)\, u_{21}(t) + 4 \cos (\lambda_2x)\, u_{22}(t).
\end{equation}
We compute the convolutions in \eqref{eq:u1i-example} and \eqref{eq:u21} through the 
Laplace transform.
For $i\geq 0$, using the formula in \eqref{eq:lap-mlf-general} and  the achieved 
contribution in \eqref{eq:lap-w} yield
\begin{align*}
(\lap ( w \conv \Theta^\alpha_i))(s) &= \,W(s) \, (\lap\Theta_i^\alpha)(s)
= \frac{s^{\alpha-1} + \lambda_1^2 s^{-1}}{s^\alpha + \lambda_1^2 - 1} \, 
\frac{1}{s^\alpha + \lambda_i^2}
\\ & =
\frac{1}{\lambda_i^2 - \lambda_1^2+1}\, \left[
\frac{s^{\alpha-1}}{s^\alpha + \lambda_1^2 - 1} 
+ \frac{\lambda_1^2 s^{-1}}{s^\alpha + \lambda_1^2 - 1} 
- \frac{s^{\alpha-1}}{s^\alpha + \lambda_i^2}
- \frac{\lambda_1^2 s^{-1}}{s^\alpha + \lambda_i^2}
\right].
\end{align*}
By taking inverse Laplace transform, we have, for $i\ge 0,$ 
\begin{align*}
& (w \conv \Theta^\alpha_i)(t)  = \frac{1}{\lambda_i^2 - \lambda_1^2+1} \times 
\\  \nonumber & \left[
E_\alpha((1-\lambda_1^2)t^\alpha) 
+ \lambda_1^2 t^\alpha E_{\alpha, \alpha + 1} ((1-\lambda_1^2)t^\alpha)
- E_\alpha(-\lambda_i^2)t^\alpha) 
  - \lambda_1^2 t^\alpha E_{\alpha, \alpha + 1} ((-\lambda_i^2)t^\alpha)
\right].
\end{align*}
Noting that, for $i\ge 1,$  using the shifting identity of the Mittag-Leffler function,
\[(w \conv \Theta^\alpha_i)(t)  = \frac{1}{\lambda_i^2 - \lambda_1^2+1}
\left[
\frac{E_\alpha((1-\lambda_1^2)t^\alpha)}{1-\lambda_1^2} 
+ \frac{\lambda_1^2 - \lambda_i^2}{\lambda_i^2} E_\alpha(-\lambda_i^2)t^\alpha)  
+k_2 - \frac{\lambda_1^2}{\lambda_i^2} \right].
\]

It remains to compute the  convolution terms in $u_{21}$. For convenience,  putting   $k_1=1/(\lambda_1^2-1)$ and $k_2=\lambda_1^2k_1.$ Using  $\lap \{t w(t) \}(s)= -W'(s)$, then    from \eqref{eq:lap-w}, we have
\begin{align*}
\lap \{t w(t) \}(s)
 &=
\frac{k_2}{s^2} + \, \frac{(\alpha - 1 )\, s^{\alpha - 2}}{(s^\alpha 
+ (\lambda_1^2 - 1))^2} - 
\frac{k_1\,s^{2\alpha - 2}}{(s^\alpha + (\lambda_1^2 - 1))^2}.
\end{align*}
Accordingly, 
\begin{align*}
&\lap\{ (t w(t) ) \conv  \Theta^\alpha_1(t) \}(s) 
 = 
\left[ \frac{k_2}{s^2} + \, \frac{(\alpha - 1 ) \, s^{\alpha - 2}}{(s^\alpha + (\lambda_1^2 - 1))^2} - 
\frac{k_1\,s^{2\alpha - 2}}{(s^\alpha + (\lambda_1^2 - 1))^2} \right] 
\frac{1}{s^\alpha + \lambda_1^2}
\\ &= 
\frac{k_2\,s^{-2}}{s^\alpha + \lambda_1^2} + 
[(\alpha-1)s^{\alpha - 2}-k_1s^{2\alpha - 2}] \Big[ \frac{1}{s^\alpha + \lambda_1^2}
- \frac{1}{s^\alpha + (\lambda_1^2 - 1)}
+ \frac{1}{(s^\alpha + (\lambda_1^2 - 1))^2}\Big].
\end{align*}
Thus,
\begin{align*}
(t w(t) ) \conv  \Theta^\alpha_1(t)
& = 
k_2 \,t^{1+\alpha} E_{\alpha, 2+\alpha}(-\lambda_1^2 t^\alpha) 
\\ &
+ (\alpha-1) \left[ 
t\,E_{\alpha,2}(-\lambda_1^2t^\alpha)
- t\,E_{\alpha,2}((1-\lambda_1^2)t^\alpha)
+ t^{\alpha + 1}\,E_{\alpha,2+\alpha}^2((1-\lambda_1^2)t^\alpha) 
\right]
\\ &
- k_1
\left[ t^{1-\alpha}\, E_{\alpha,2 - \alpha}(-\lambda_1^2t^\alpha) 
- t^{1-\alpha}\,E_{\alpha,2 - \alpha}((1-\lambda_1^2)t^\alpha) 
+ t \, E_{\alpha,2}^2((1-\lambda_1^2)t^\alpha) 
\right].
\end{align*}
For the second convolution term in $u_{21}$, since 
\begin{align*}
\lap  \Big\{
w(t) \conv \big(t^{2\alpha - 1} & E_{\alpha,2\alpha}^2(-\lambda_1^2 t^\alpha)\big) 
\Big\}(s) =
k_1 \left[\frac{\lambda_1^2}{s} - \frac{s^{\alpha-1}}{s^\alpha+\lambda_1^2 -1} \right]
\frac{1}{(s^\alpha + \lambda_1^2)^2}
\\ &=
k_1 \left[\frac{\lambda_1^2}{s (s^\alpha + \lambda_1^2)^2} + \frac{ s^{\alpha - 1}}{(s^\alpha + \lambda_1^2)} + \frac{s^{\alpha - 1}}{(s^\alpha + \lambda_1^2)^2} - \frac{s^{\alpha - 1}}{s^{\alpha}+(\lambda_1^2 - 1)}\right],
\end{align*}
\begin{align*}
& w(t) \conv \big(t^{2\alpha - 1} E_{\alpha,2\alpha}^2(-\lambda_1^2 t^\alpha)\big) =
\\  & \quad 
k_1 \left[
\lambda_1^2 \, t^{2\alpha} \, E_{\alpha,2\alpha + 1}^2(-\lambda_1^2t^\alpha) 
+ E_{\alpha}(-\lambda_1^2t^\alpha) 
+ t^{ \alpha}\,E_{\alpha,1+\alpha }^2(-\lambda_1^2 t^\alpha) 
- E_{\alpha}((1-\lambda_1^2)t^\alpha)\right].
\end{align*}

After the above tedious work, we are ready to compare between $w$ and  $u$, 
and their approximations $\tilde w$ and $\tilde u$, respectively. 
To evaluate the errors, we introduce the finer grid
\begin{equation}
\label{eq: fine grid}
\G^N = X\cup
\{\,(t_n+t_{n-1})/2\,:\, 1\leq n \leq N\}\cup \{\,t_n \,:\, 1\leq n \leq N\}.
\end{equation}
($N$ is the number of time mesh subintervals). Thus, for large values of $N$, the error measure 
$ |||v|||_{\G^N}:=\max_{t\in\G^{N}}\|v(t)\|$
approximates the norm $\|v\|_{L_\infty}$ on the time interval $(0,1]$. 

The source term $w$ satisfies the  regularity assumption in \eqref{eq: reg}  for $\sigma=\alpha$. 
Thus,  the theoretical error results   in subsection \ref{sec: error analysis} suggest that $\|w-\tilde w\|_{L^\infty(0,T)}=O((\Delta t)^{\min\{2,\alpha\delta\}})$. 
The numerical numbers in Tables $1$ and $2$ confirm these results for different choices 
of the fractional exponent $\alpha$ and of the graded mesh exponent $\delta$. 
For some graphical illustrations of the errors and the positive influence of the graded mesh, 
see Figures \ref{fig:Sourceterm error delta1} and \ref{fig:Sourceterm error delta3}. 


We focus next on the graphical comparison between the exact solution $u$ and its approximation $\tilde u$.
As mentioned earlier, when the parameter $\gamma<1$, the non-local initial condition
$I^{1-\gamma}u(x,t)|_{t=0}=g(x)=4\cos(4\pi x)$ leads to unbounded solution $u$ as $t \to 0$, 
which is the case here, see \eqref{eq: example exact solution}. 
If we choose $\alpha=\gamma=0.5$, then the two-parameter derivative $\!D^{\alpha,\gamma}$ reduces to 
the Riemann-Liouville derivative $D^\alpha = D^{0.5}$. 
Figure \ref{fig:alf05gam05_largeT}  shows surface plots of both the exact and numerical 
solutions over graded time meshes with mesh exponent $\delta=2$. 
We cut off the initial part of the plots since the solution blows up near $t=0$. 
The surface plots for small $t$ are shown separately in Figure \ref{fig:alf05gam05_smallT}. 
Surface plots for $\alpha=0.5$ and $\gamma=0.7$ are shown in Figures \ref{fig:alf05gam07_largeT} 
and \ref{fig:alf05gam07_smallT}.

When $\gamma=1$, $I^{1-\gamma}u(x,t)|_{t=0}=u(x,t)|_{t=0}=g(x)=4\cos(4\pi x)$. In this case, 
$\!D^{\alpha,\gamma}$ reduces to the Caputo fractional derivative $\cD^\alpha$, 
and the solution is uniformly bounded over the time-space domain. 
Figure \ref{fig:alf0.5gam1} shows the surface plots of $u$ and $\tilde u$ 
over graded time meshes with $\delta=2$.

\section{Concluding Remarks}
A numerical scheme for approximating the time-dependent source term is developed.
The scheme solves the VIE for the source term and approximates the convolutions needed 
to calculate the series coefficients for the solution. 
Due to the singularity of the solution near $t=0$, a graded mesh is used to improve
the convergence rate.
We proved analytically and demonstrated numerically that the rate of convergence is of order 
$\sigma \delta$, where $\sigma$ is the order of regularity of $w$ and $\delta$ is 
the graded mesh exponent.

\section*{Acknowledgment}
The authors would like to acknowledge the support provided by 
King Fahd University of Petroleum \& Minerals.


\newpage

\begin{table}[tbp]
\label{table1}
\centering
	\begin{tabular}{|r|rr|rr|rr|rr|rr|}
			\hline 
$N$ &\multicolumn{2}{c|}{$\delta=1$} & \multicolumn{2}{c|}{$\delta=2$}& 
\multicolumn{2}{c|}{$\delta=3$}& \multicolumn{2}{c|}{$\delta=4$} & \multicolumn{2}{c|}{$\delta=5$} \\
			\hline
			10& 2.37e-02&     & 2.06e-02&     & 1.52e-02&     & 8.72e-03&     & 4.10e-03&      \\
			20& 2.30e-02& 0.04& 1.76e-02& 0.23& 9.30e-03& 0.70& 3.46e-03& 1.34& 1.11e-03& 1.89\\
			40& 2.21e-02& 0.06& 1.38e-02& 0.35& 4.80e-03& 0.96& 1.21e-03& 1.52& 2.81e-04& 1.98\\
			80& 2.10e-02& 0.07& 9.89e-03& 0.48& 2.24e-03& 1.10& 4.04e-04& 1.58& 7.05e-05& 1.99\\
			160&1.97e-02& 0.09& 6.53e-03& 0.60& 1.00e-03& 1.16& 1.34e-04& 1.59& 1.76e-05& 1.99\\
			320&1.81e-02& 0.12& 4.06e-03& 0.68& 4.42e-04& 1.18& 4.43e-05& 1.59& 4.72e-06& 1.94\\
			\hline
	\end{tabular}
\caption {Errors and convergence rates for $\alpha=0.4$ and different values of $\delta$.} 
\end{table}
\begin{table}[tbp]
\label{table2}
\centering
	\begin{tabular}{|r|rr|rr|rr|rr|}
			\hline 
$N$ &\multicolumn{2}{c|}{$\delta=1$} & \multicolumn{2}{c|}{$\delta=2$} 
& \multicolumn{2}{c|}{$\delta=3$} & \multicolumn{2}{c|}{$\delta=4$} \\
			\hline
			10& 2.1240e-02&      & 9.4174e-03&      & 2.0410e-03&      &  6.5174e-04&      \\
			20& 1.8368e-02& 0.210& 3.8912e-03& 1.275& 4.7314e-04& 2.109&  2.3750e-04& 1.456\\
			40& 1.4620e-02& 0.329& 1.4617e-03& 1.413& 1.1443e-04& 2.048&  6.9103e-05& 1.781\\
			80& 1.0625e-02& 0.460& 5.5222e-04& 1.404& 3.0410e-05& 1.912&  1.8453e-05& 1.905\\
			160& 7.1333e-03& 0.575& 2.1308e-04& 1.374& 7.9822e-06& 1.930& 4.7688e-06& 1.952\\
			320& 4.5350e-03& 0.654& 8.3305e-05& 1.355& 2.0444e-06& 1.965& 1.2111e-06& 1.977\\
			\hline
		\end{tabular}
\caption {Errors and convergence rates for $\alpha=0.67$ and different values of $\delta$.}
\end{table}

\begin{figure}[tbp]
\centering
\includegraphics[width=.75\textwidth]{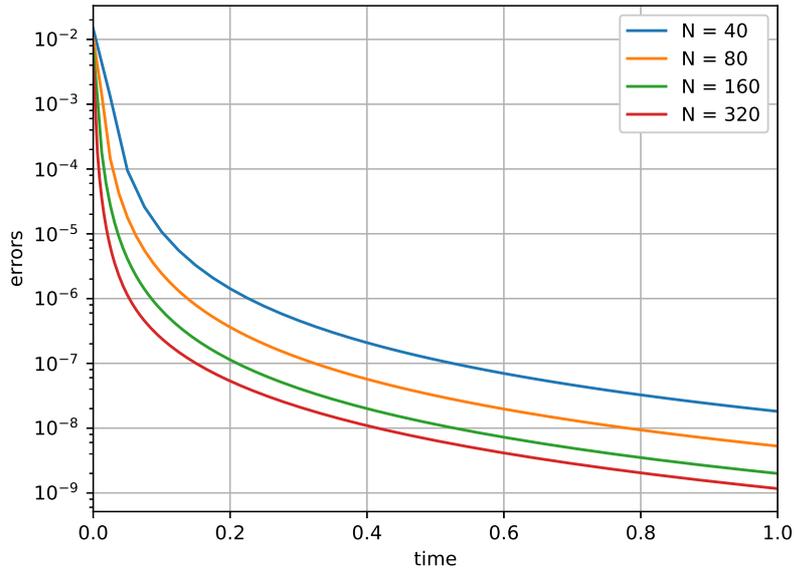}
\caption{Pointwise errors over a uniform mesh for $\alpha = 0.67$.}
\label{fig:Sourceterm error delta1}
\end{figure}
\begin{figure}[tbp]
\centering
\includegraphics[width=.75\textwidth]{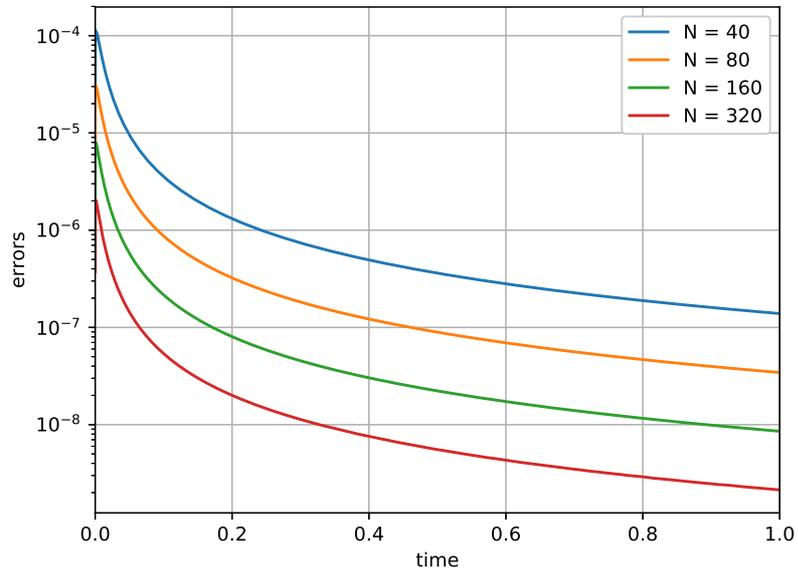}
\caption{Pointwise errors over a nonuniform mesh of $\delta = 3$ for $\alpha = 0.67$.}
\label{fig:Sourceterm error delta3}
\end{figure}

\begin{figure}[tpp]
\centering
\includegraphics[width=.49\textwidth]{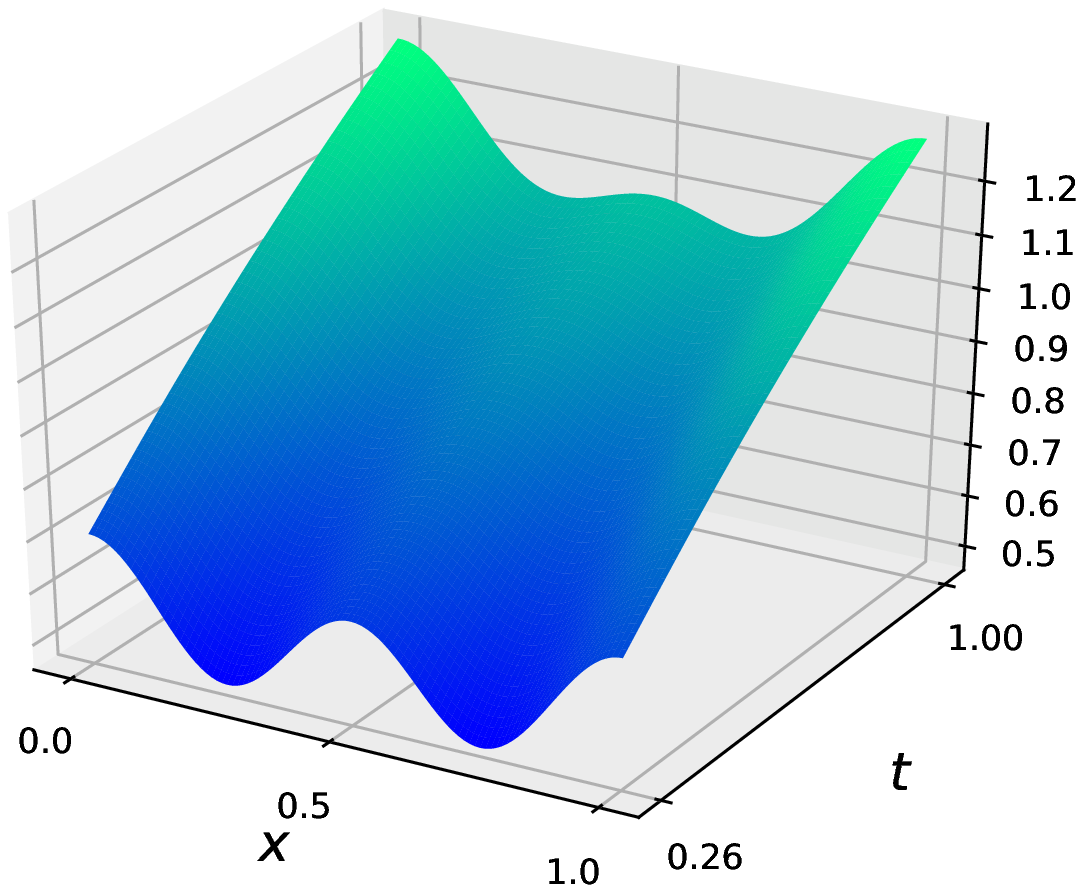} 
\includegraphics[width=.49\textwidth]{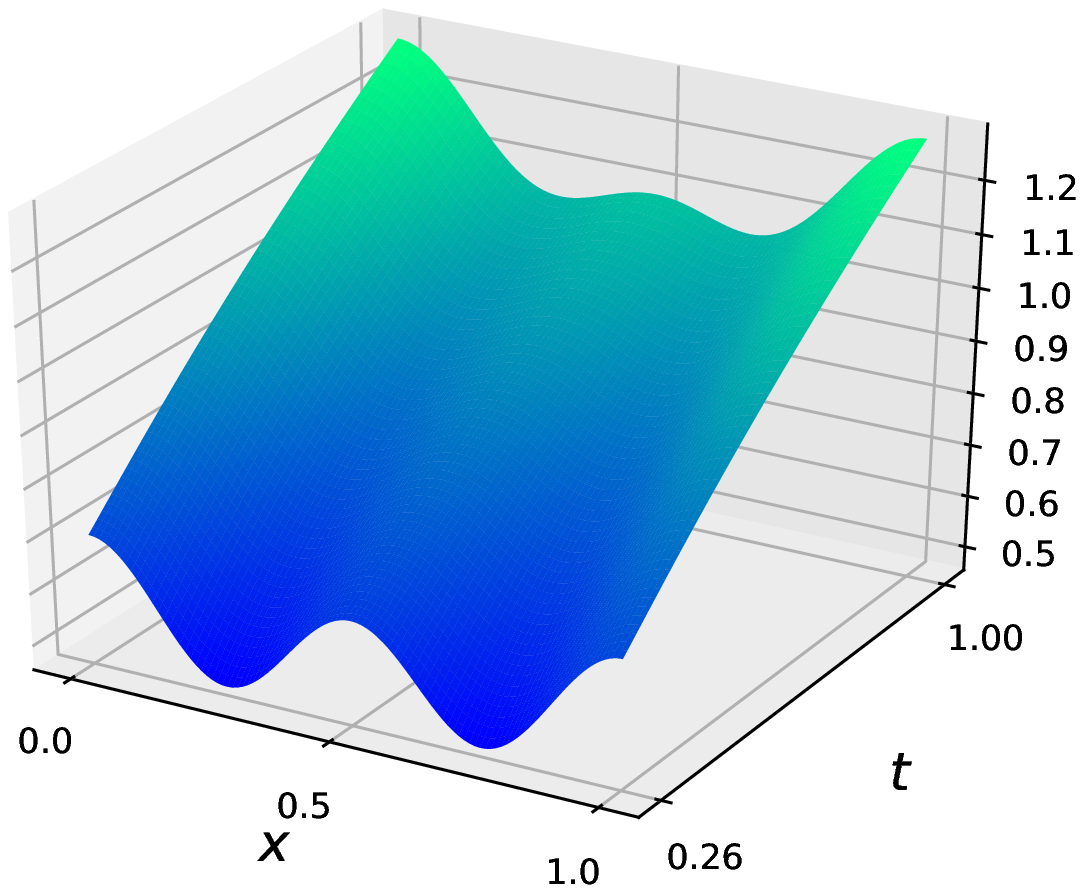}
\caption{Surface plot of the exact solution (left) and the approximate solution (right) 
for $0.26 < t < 1.0$, $\alpha=\gamma=0.5$ and $\delta=2$.}
\label{fig:alf05gam05_largeT}
\end{figure}

\begin{figure}[tbp]
\centering
\includegraphics[width=.49\textwidth]{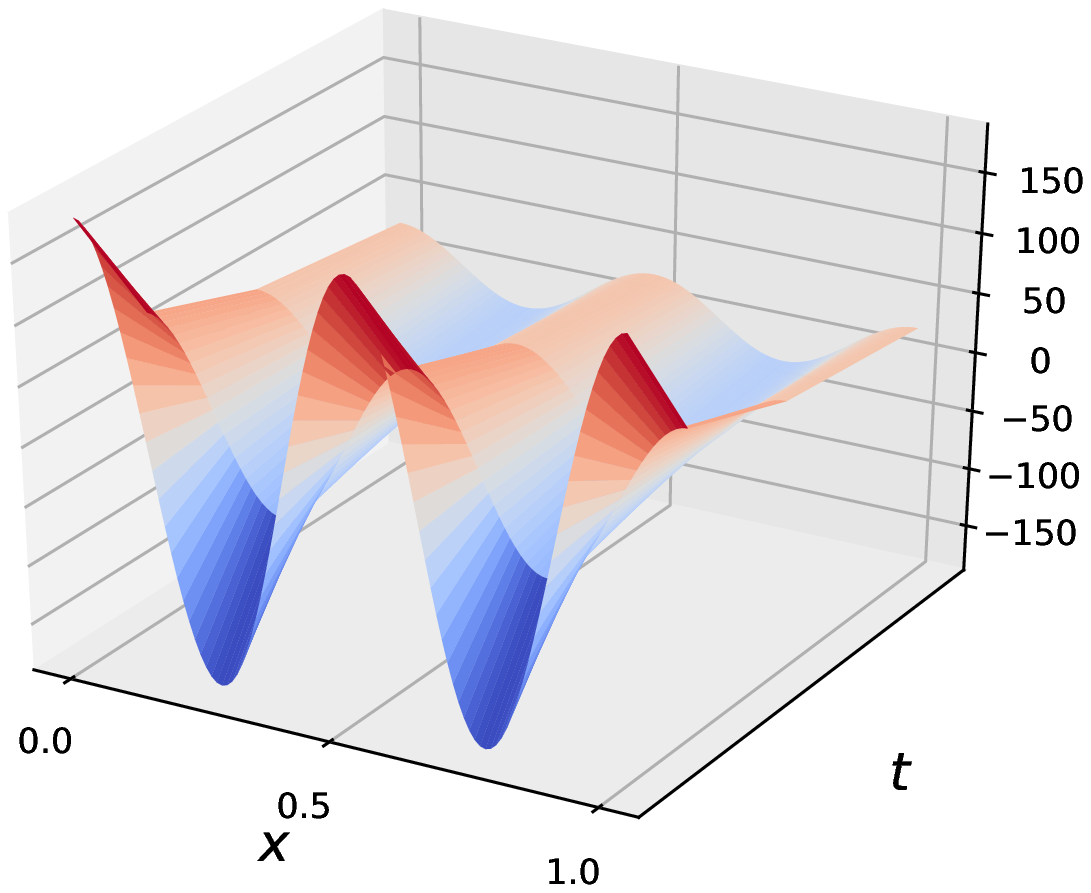} 
\includegraphics[width=.49\textwidth]{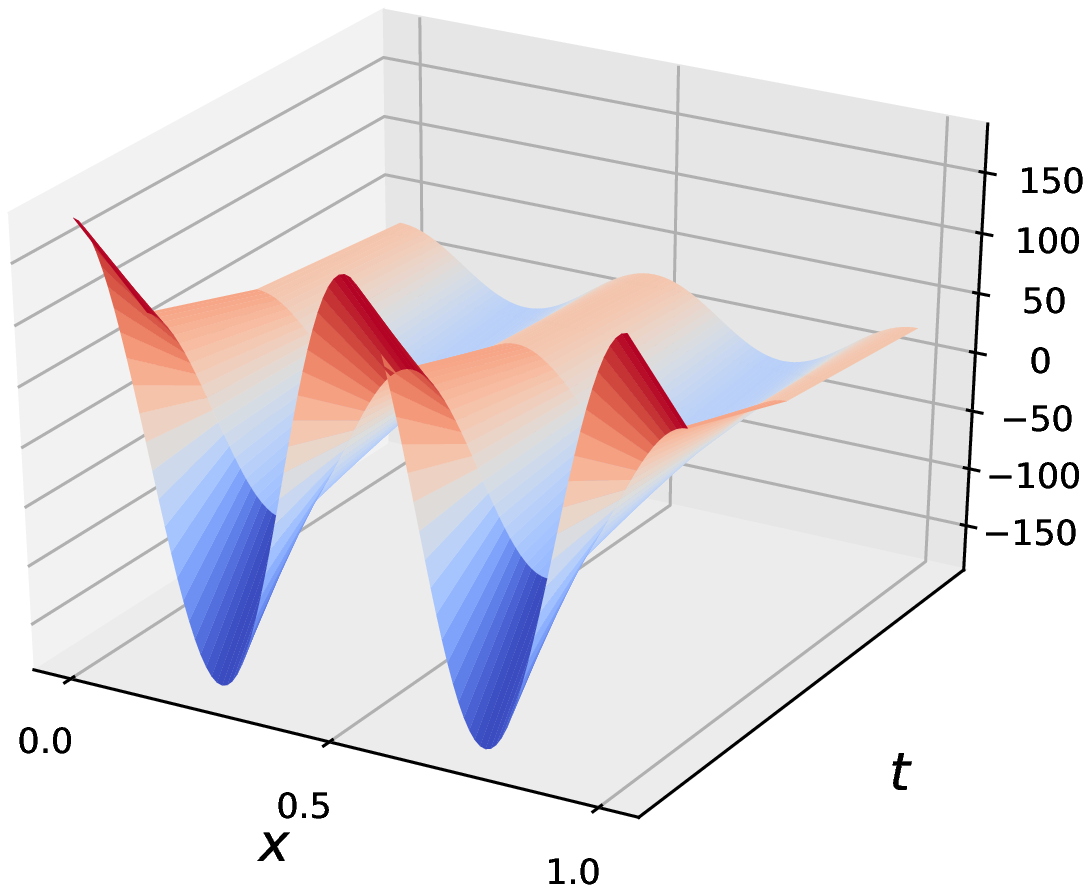}
\caption{Surface plot of the exact solution (left) and the approximate solution (right) for 
$0.1 \times 10^{-3} < t < 1.6\times 10^{-3}$, $\alpha=\gamma=0.5$ and $\delta=2$.}
\label{fig:alf05gam05_smallT}
\end{figure}

\begin{figure}[tbp]
\centering
\includegraphics[width=.49\textwidth]{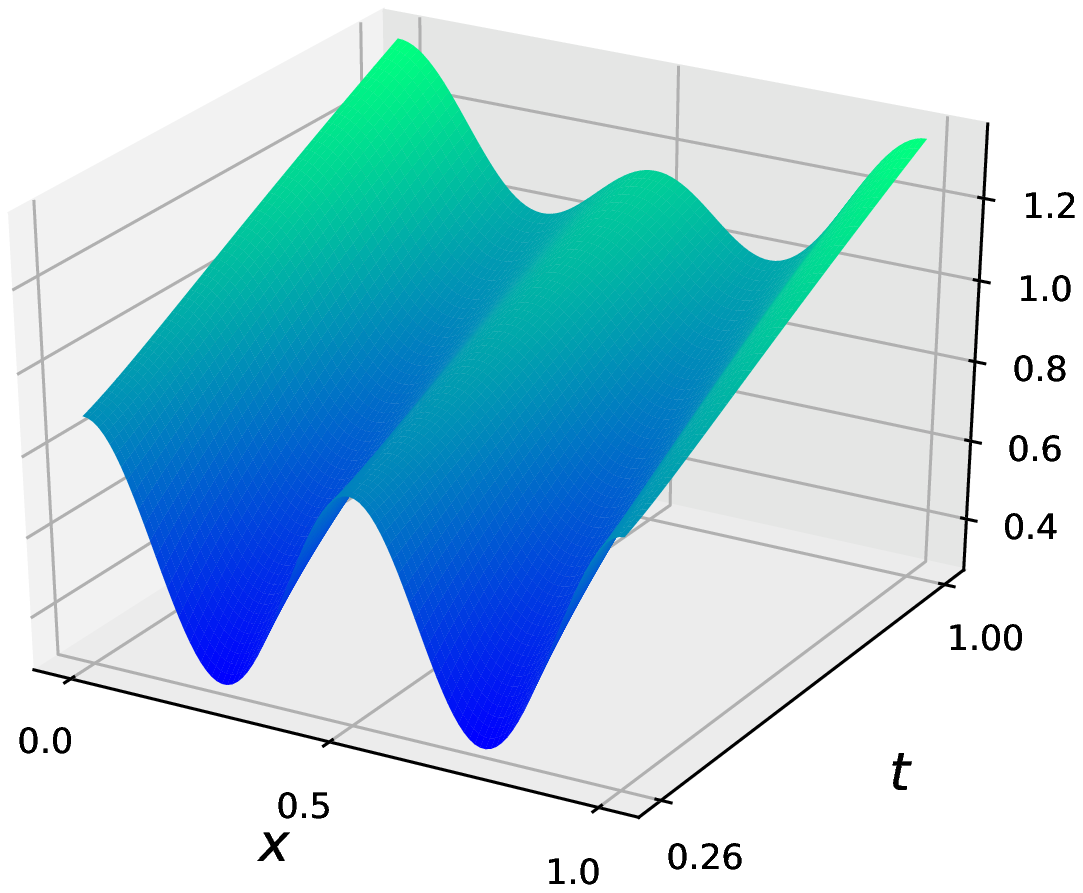} 
\includegraphics[width=.49\textwidth]{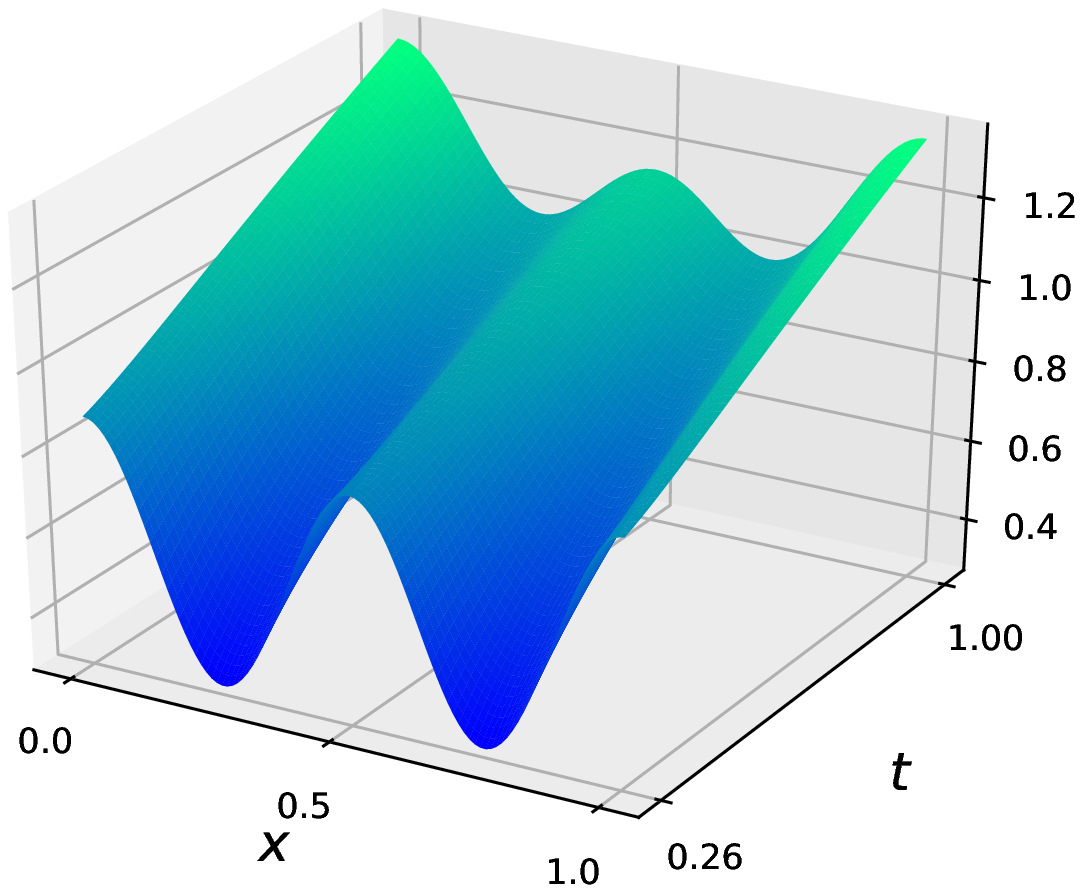}
\caption{Surface plot of the exact solution (left) and the approximate solution (right) 
for $0.26 < t < 1$, $\alpha=0.5$ and $\gamma=0.7$.}
\label{fig:alf05gam07_largeT}
\end{figure}

\begin{figure}[tbp]
\includegraphics[width=.49\textwidth]{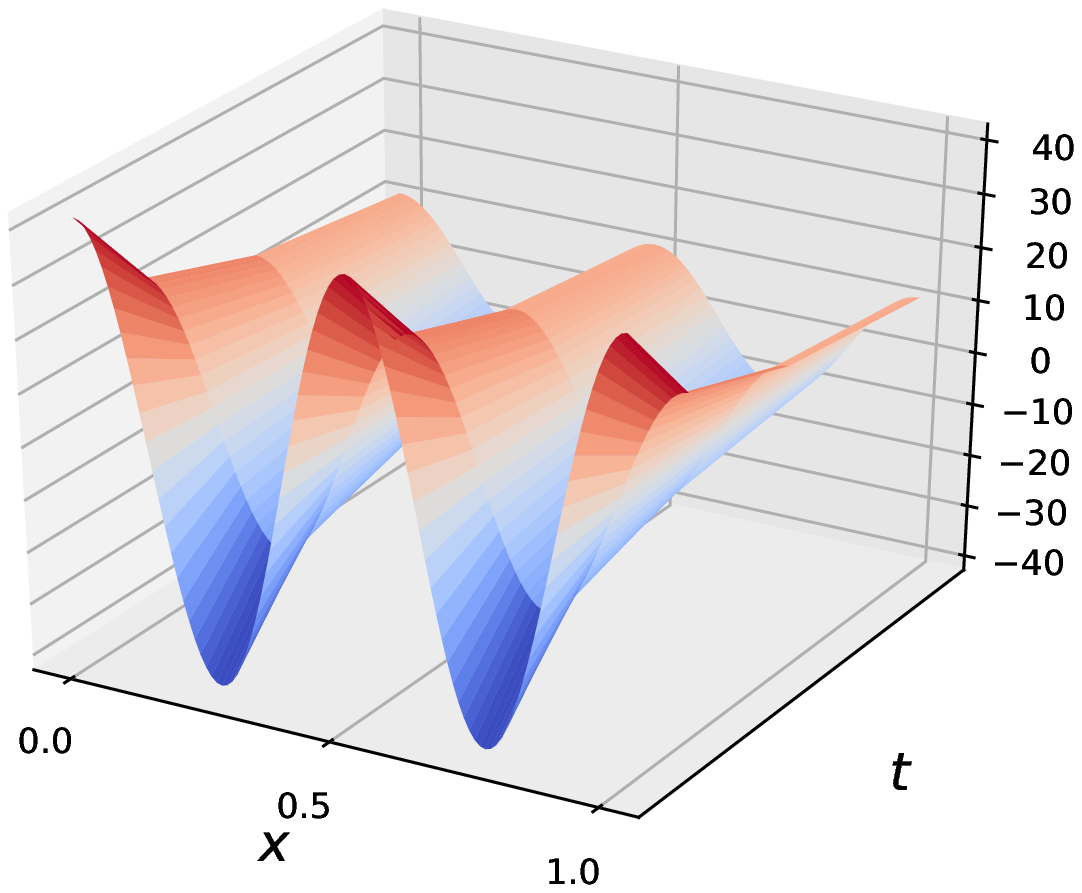} 
\includegraphics[width=.49\textwidth]{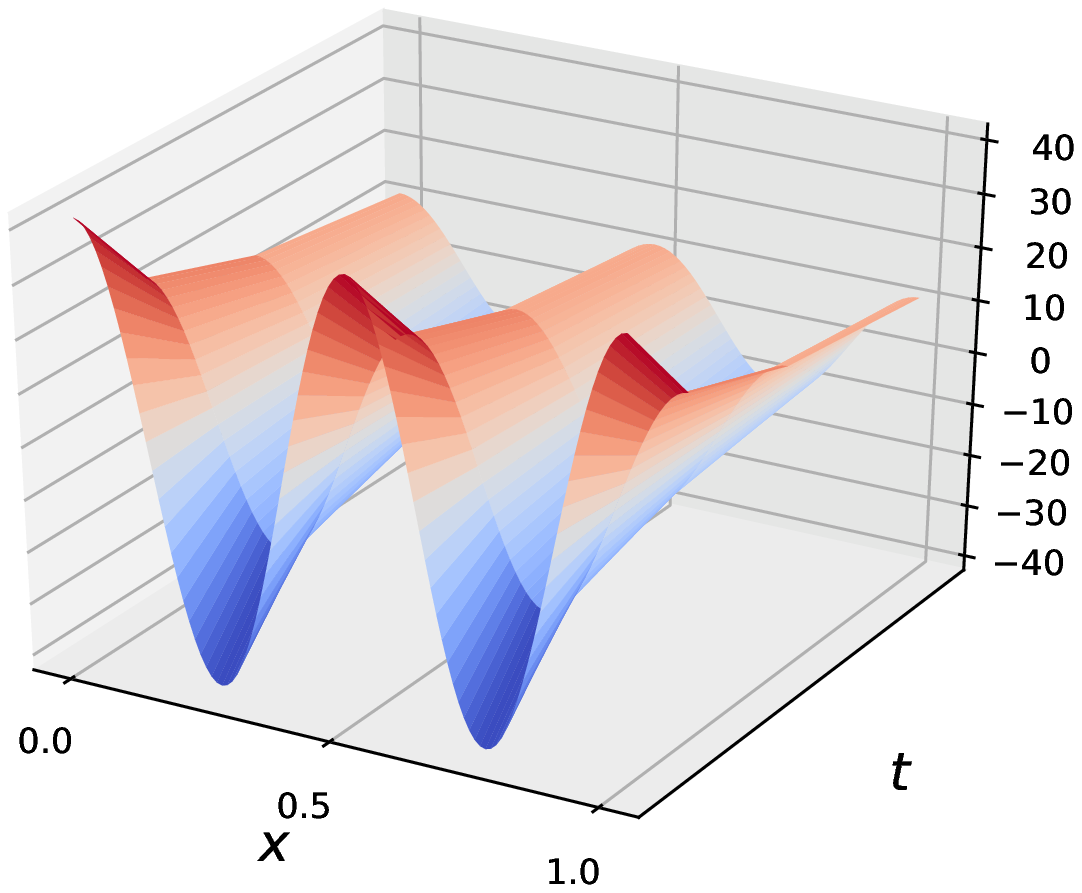}
\caption{Surface plot of the exact solution (left) and the approximate solution (right) 
for $0.1 \times 10^{-3} < t < 1.6\times 10^{-3}$,
$\alpha=0.5$ and $\gamma=0.7$.}
\label{fig:alf05gam07_smallT}
\end{figure}

\begin{figure}[tbp]
\centering
\includegraphics[width=.49\textwidth]{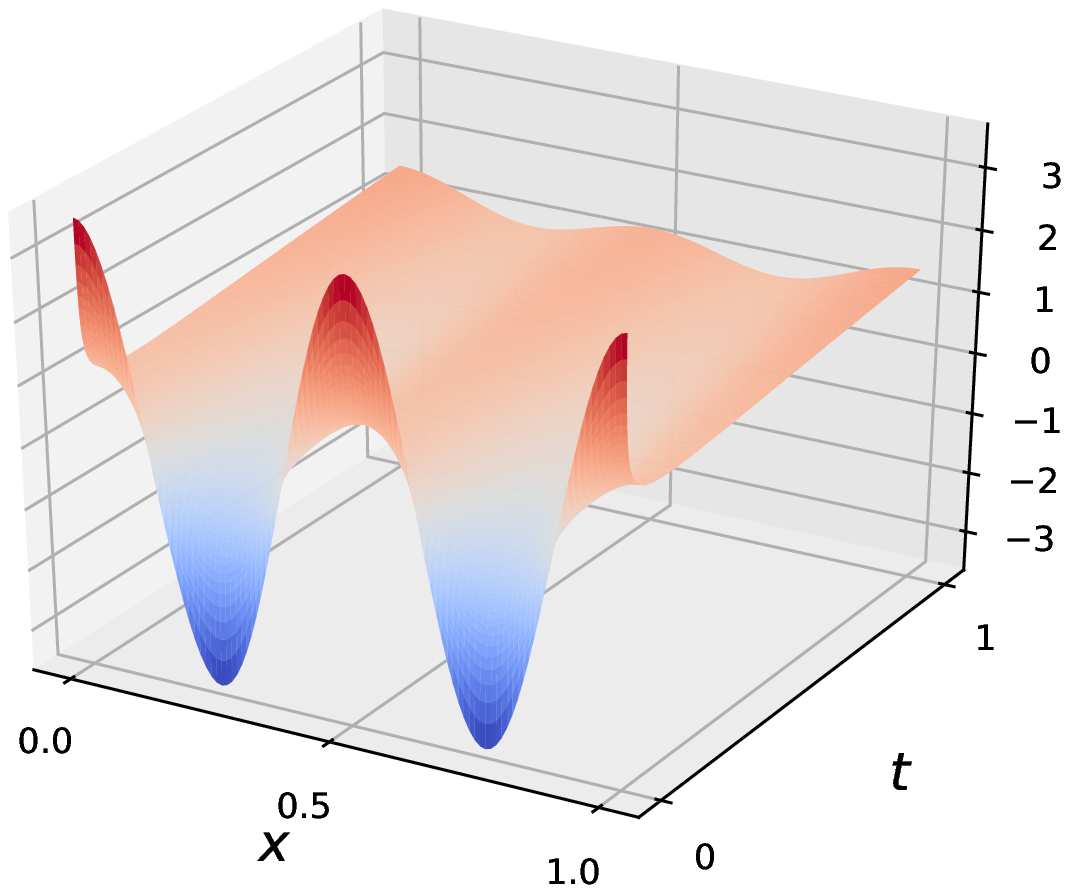} 
\includegraphics[width=.49\textwidth]{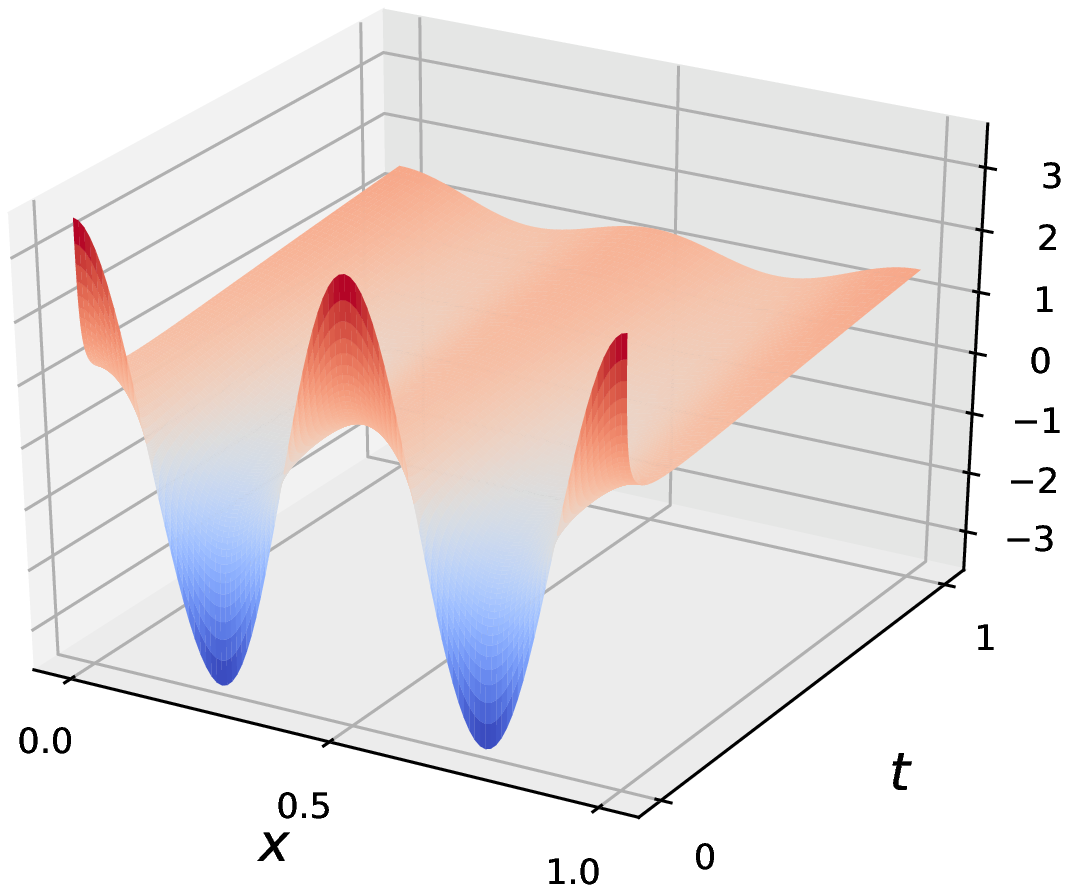}
\caption{Surface plot of the exact solution (left) and the approximate solution (right) for 
$\alpha=0.5$, $\gamma=1$ and $\delta =2$.}
\label{fig:alf0.5gam1}
\end{figure}

\end{document}